\newcommand{\R}{\mathbf{R}}
\renewcommand{\P}{\mathrm{P}}
\newcommand {\E}{\mathrm{E}}
\renewcommand{\d}{\text{\rm d}}
\newcommand{\sG}{\mathcal{G}}
\newcommand{\sE}{\mathcal{E}}
\newcommand{\sI}{\mathcal{I}}
\newcommand{\sS}{\mathcal{S}}
\newcommand{\ve}{\epsilon}
\newtheorem{stat}{Statement}[section]
\newtheorem{proposition}[stat]{Proposition}
\newtheorem{corollary}[stat]{Corollary}
\newtheorem{theorem}[stat]{Theorem}
\newtheorem{lemma}[stat]{Lemma}
\theoremstyle{definition}
\newtheorem{definition}[stat]{Definition}\newtheorem{remark}[stat]{Remark}
\numberwithin{equation}{section}
\begin{document}

\title{\bf Remarks on non-linear noise excitability of some stochastic heat equations.%
	\thanks{%
	Research supported in part by EPSRC.}}
	
\author{Mohammud Foondun\\Loughborough University
\and Mathew Joseph\\University of Sheffield}

\date{January 31, 2014}
\maketitle
\begin{abstract}
	We consider nonlinear parabolic
	SPDEs of the form
	$\partial_t u=\Delta u + \lambda \sigma(u)\dot w$ on the interval $(0, L)$, where
	$\dot w$ denotes space-time white noise,
	$\sigma$ is Lipschitz continuous. Under Dirichlet boundary conditions and a linear growth condition on $\sigma$,  we show that
	the expected $L^2$-energy is of order $\exp[\text{const}\times\lambda^4]$ as $\lambda\rightarrow \infty$.
	This significantly improves a recent result of Khoshnevisan and  Kim \cite{KK1}.  Our method is very different from theirs and it allows us to arrive at the same conclusion for the same equation but with Neumann boundary condition.
	 This improves over another result in \cite{KK1}.\\
	
	\vskip .2cm \noindent{\it Keywords:}
		Stochastic partial differential equations, \\
		
	\noindent{\it \noindent AMS 2000 subject classification:}
		Primary: 60H15; Secondary: 82B44.\\
		
	\noindent{\it Running Title:} Noise excitability and
		parabolic SPDEs.\newpage
\end{abstract}

\section{Introduction}

The main objective of this paper is to study the effect of noise on the solutions to various stochastic heat equations.  Fix $L>0$ and consider the following
\begin{equation}\label{eq:dirichlet}
\left|\begin{split}
&\partial_t u_t(x)=\frac{1}{2}\Delta u_t(x)+\lambda \sigma(u_t(x))\dot{w}(t,\,x),\\
&u_t(0)=0, \quad u_t(L)=0,
\end{split}
\right.
\end{equation}
where $\dot{w}$ denotes the space time white noise on $(0,\infty)\times (0,\,L)$. $\lambda$ is a positive number called the {\it level of the noise}.  Here and throughout this paper, the initial function $u_0:[0,\,L]\rightarrow \R_+$ is a nonrandom function which is compactly supported in $[0,\,L]$. $\sigma:\R \rightarrow \R$ is a continuous function with $\sigma(0)=0$ and
\begin{equation*}
l_\sigma:=\inf_{x\in \R\backslash \{0\}}\left| \frac{\sigma(x)}{x}\right|\quad \text{and}\quad
L_\sigma:=\sup_{x\in \R\backslash \{0\}}\left| \frac{\sigma(x)}{x}\right|,
\end{equation*}
where $0<l_\sigma\leq L_\sigma<\infty$.

Our study is motivated by a recent paper of Khoshnevisan and Kim \cite{KK1} where the authors initiated the study of the effect of $\lambda$ on the energy of the solution. We will shortly describe their results in a bit more detail but let us mention that existence and uniqueness is not an issue for us.  It is well known that the above equation has a unique mild solution satisfying

\begin{equation}\label{moments}
\sup_{x\in[0,\,L]}\sup_{t\in[0,\,T]} \E|u_t(x)|^k<\infty \quad\text{for all}\quad T>0 \quad\text{and}\quad k\in[2,\,\infty].
\end{equation}

Moreover the solution admits the following integral representation
\begin{equation}\label{mild:dirichlet}
u_t(x)=
(\sG_Du)_t(x)+ \lambda \int_0^L\int_0^t p_D(t-s,\,x,\,y)\sigma(u_s(y))w(\d s\,\d y),
\end{equation}
where
\begin{equation*}
(\sG_D u)_t(x):=\int_0^L u_0(y)p_D(t,\,x,\,y)\,\d y,
\end{equation*}
and $p_D(t,\,x,\,y)$ denotes the Dirichlet heat kernel. As usual, \eqref{mild:dirichlet} will be the starting point of most of our analysis.  For more information about existence-uniquness , see \cite{Minicourse} or \cite{Walsh} for more information.

To describe our results in a precise manner, we adopt some notations and definitions from \cite{KK1} and \cite{KK2}.  We begin by defining  the {\it energy of the solution at time t} by
\begin{equation}\label{energy}
\sE_t(\lambda):=\sqrt{\E\left(\|u_t\|^2_{L^2[0,\,L]}\right)}.
\end{equation}

One of the main results in \cite{KK1} states that as $\lambda$ gets large, $\sE_t(\lambda)$ grows at most like $\exp(\text{const}\times \lambda^4)$ but at least like $\exp(\text{const}\times \lambda^2)$. This current project grew out of trying to understand this discrepancy. The following indices were introduced in \cite{KK2} to capture the {\it super exponential} growth just mentioned.

\begin{definition}
The {\it upper excitation index} of $u$ at time $t$ is given by
\begin{equation*}
\overline{e}(t):=\limsup_{\lambda\rightarrow \infty}\frac{\log \log \sE_t(\lambda)}{\log \lambda}
\end{equation*}
\end{definition}

\begin{definition}
The {\it lower excitation index} of $u$ at time $t$ is given by
\begin{equation*}
\underline{e}(t):=\liminf_{\lambda\rightarrow \infty}\frac{\log \log \sE_t(\lambda)}{\log \lambda}
\end{equation*}
\end{definition}

When $\bar{e}(t)$ and $\underline{e}(t)$ are equal, we simply refer to the common value as the {\it nonlinear noise excitation index} of the solution at time $t$, which is required to be strictly positive. We are now ready to state the first main result of the paper.

\begin{theorem}\label{theo:dirich:energy}
The nonlinear excitation index of the solution to \eqref{eq:dirichlet} is $4$.
\end{theorem}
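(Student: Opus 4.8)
The plan is to establish the two-sided bound $\exp(c_1(t)\lambda^4)\le \sE_t(\lambda)\le \exp(c_2(t)\lambda^4)$ for all large $\lambda$, with $0<c_1(t)\le c_2(t)<\infty$; this forces $\overline e(t)=\underline e(t)=4$. Write $F(t):=\E\|u_t\|_{L^2[0,L]}^2=\sE_t(\lambda)^2$. Squaring \eqref{mild:dirichlet}, using that $(\sG_D u)_t(x)$ is non-random and the stochastic integral is a mean-zero martingale, Walsh's isometry (legitimate by \eqref{moments}) gives
\begin{equation*}
\E|u_t(x)|^2=|(\sG_D u)_t(x)|^2+\lambda^2\int_0^t\!\!\int_0^L p_D(t-s,x,y)^2\,\E|\sigma(u_s(y))|^2\,\d y\,\d s .
\end{equation*}
Since $l_\sigma^2 z^2\le\sigma(z)^2\le L_\sigma^2 z^2$, this yields matching upper and lower bounds on $\E|u_t(x)|^2$ in terms of $\E|u_s(y)|^2$, and everything will be driven from here.

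For the upper bound, integrate the last display in $x$, use the semigroup identity $\int_0^L p_D(r,x,y)^2\,\d x=p_D(2r,y,y)$ together with $p_D(2r,y,y)\le p(2r,y,y)=(4\pi r)^{-1/2}$ ($p$ the free heat kernel), and the contraction $\|(\sG_D u)_t\|_{L^2}\le\|u_0\|_{L^2}$, to get
\begin{equation*}
F(t)\le \|u_0\|_{L^2}^2+\frac{L_\sigma^2\lambda^2}{\sqrt{4\pi}}\int_0^t\frac{F(s)}{\sqrt{t-s}}\,\d s .
\end{equation*}
Because $F$ is bounded on $[0,t]$ by \eqref{moments}, a standard fractional Gr\"onwall argument applies: iterating, the $n$-fold convolution of $r^{-1/2}$ equals $\Gamma(\tfrac12)^n\Gamma(\tfrac n2)^{-1}r^{n/2-1}$, so the iterates sum to a Mittag--Leffler function $E_{1/2}(\mathrm{const}\cdot\lambda^2\sqrt t)\sim 2\exp(\mathrm{const}\cdot\lambda^4 t)$, whence $F(t)\le C\exp(C'\lambda^4 t)$ and $\overline e(t)\le 4$.

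For the lower bound, the difficulty is that $\int_0^L p_D(r,x,y)^2\,\d x=p_D(2r,y,y)$ degenerates as $y\to 0$ or $y\to L$, so integrating over all of $(0,L)$ does not close into a self-improving inequality with a positive source term. I localize away from the boundary: set $h(t):=\int_{L/4}^{3L/4}\E|u_t(y)|^2\,\d y\le F(t)$, and in the lower-moment inequality retain only $x,y\in[L/4,3L/4]$. The key heat-kernel estimate is that for $y$ at distance $\ge L/4$ from $\{0,L\}$ the image-charge corrections in $p_D(r,\cdot,\cdot)$ are $O(e^{-c/r})$, so $\int_{L/4}^{3L/4}p_D(r,x,y)^2\,\d x\ge c\,r^{-1/2}$ for all small $r$, while positivity of $p_D$ in the interior keeps this quantity strictly positive for every $r>0$. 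Hence $\kappa_t:=\inf\{\,r^{1/2}\!\int_{L/4}^{3L/4}p_D(r,x,y)^2\,\d x:0<r\le t,\ y\in[L/4,3L/4]\,\}>0$, and
\begin{equation*}
h(\tau)\ge c_0(\tau)+\kappa_t\,l_\sigma^2\,\lambda^2\int_0^\tau\frac{h(s)}{\sqrt{\tau-s}}\,\d s,\qquad \tau\in(0,t],
\end{equation*}
where $c_0(\tau):=\int_{L/4}^{3L/4}|(\sG_D u)_\tau(x)|^2\,\d x>0$ for each $\tau>0$ (here $u_0\ge 0$, $u_0\not\equiv 0$ forces $(\sG_D u)_\tau>0$ on $(0,L)$). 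Iterating this inequality and bounding $c_0$ below by a positive constant on $[t/2,t]$, the $n$-th iterate contributes at least a fixed multiple of $\Gamma(\tfrac n2+1)^{-1}(\kappa_t l_\sigma^2\lambda^2\sqrt{\pi t/2})^n$, so $h(t)\gtrsim E_{1/2}(\mathrm{const}\cdot\lambda^2\sqrt t)\sim 2\exp(\mathrm{const}\cdot\lambda^4 t)$ as $\lambda\to\infty$. Therefore $\sE_t(\lambda)\ge\sqrt{h(t)}\ge\exp(\mathrm{const}\cdot\lambda^4)$ and $\underline e(t)\ge 4$.

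I expect the main obstacle to be precisely the boundary degeneracy in the lower bound: one must verify that, after localizing to $[L/4,3L/4]$, the kernel $\int_{L/4}^{3L/4}p_D(r,x,y)^2\,\d x$ is genuinely of order $r^{-1/2}$ for small $r$, uniformly in $y\in[L/4,3L/4]$ (it is this square-root singularity that produces $\lambda^4$ rather than $\lambda^2$ in the renewal iteration), and that the source $c_0(\tau)$, although it may tend to $0$ as $\tau\downarrow 0$, can still be used on $[t/2,t]$ without affecting the exponential rate. The remaining ingredients — Walsh's isometry, the contraction property of $\sG_D$, the semigroup identity, and the renewal/Mittag--Leffler asymptotics — are routine.
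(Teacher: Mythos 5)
Your proposal is correct, and the upper bound is the same as the paper's (Walsh isometry, $\int_0^L p_D(r,x,y)^2\,\d x=p_D(2r,y,y)\le p(2r,y,y)$, then the renewal/Gr\"onwall iteration in the parameter $\lambda^2$). For the lower bound you share the paper's two key ideas --- a renewal inequality with kernel $(t-s)^{-1/2}$ read as an inequality in $\lambda$, and the Gaussian comparison for $p_D$ away from the boundary --- but you organize the argument differently. The paper first proves a \emph{pointwise} renewal inequality for $\sI_{\ve,t}(\lambda)=\inf_{x\in[\ve,L-\ve]}\E|u_t(x)|^2$ (Proposition \ref{lower-point}), valid only for $t\le t_0(\ve)$ because its Corollary \ref{cor:comp-dirich} gives $p_D\ge\tfrac12 p$ only for small times; it then feeds $\sI_{\ve,s}$ into the integrated second-moment identity to control the energy for $t\le t_0$, and finally removes the small-time restriction by a Markov restart at time $T=t-t_1$ with $v_s=u_{T+s}$ (Step 2). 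You instead close a \emph{single} renewal inequality directly on the localized energy $h(\tau)=\int_{L/4}^{3L/4}\E|u_\tau(y)|^2\,\d y$, and you avoid the restart entirely by observing that $\kappa_t=\inf\{r^{1/2}\int_{L/4}^{3L/4}p_D(r,x,y)^2\,\d x\}>0$ over the whole range $0<r\le t$ (Gaussian comparison for small $r$, plus positivity and compactness for $r$ bounded away from $0$). This is a real simplification: it collapses the paper's three-stage lower bound into one iteration. Your explicit treatment of the source term --- keeping only $c_0(s_1)$ for $s_1\in[t/2,t]$ in the iterated integrals, which still yields the $\Gamma(n/2+1)^{-1}(\mathrm{const}\cdot\lambda^2)^n$ terms and hence the Mittag--Leffler growth $\exp(\mathrm{const}\cdot\lambda^4)$ --- is also more careful than the paper, whose Proposition \ref{lowerbound} formally requires a $t$-uniform additive constant $a>0$ even though $(\sG_Du)_t$ may degenerate as $t\downarrow 0$ when $u_0$ vanishes on part of $[\ve,L-\ve]$; your fix would in fact repair that point in the paper's own argument. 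The only blemishes are cosmetic (e.g.\ the normalization $p(2r,y,y)=(8\pi r)^{-1/2}$ rather than $(4\pi r)^{-1/2}$), and none affects the conclusion.
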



Estimating the lower excitation index is the main contribution of this paper and our approach requires two new ideas which we now describe.

\begin{enumerate}
\item We use a couple of renewal inequalities which give the desired upper and lower bound on the energy.  The use of renewal theoretic ideas was introduced in \cite{FK} but here we use it in a different manner; see Remark \ref{renew}.
\item To arrive at these renewal inequalities, we make use of the idea that for small times and away from the boundary, the Dirichlet heat kernel behaves pretty much like that Gaussian heat kernel.  This idea has been the subject of intense investigations for decades now; see \cite{Bass} and \cite{Stroock}. Since we are working in spatial dimension one, we provide complete analytic proofs of the main estimates we need.
\end{enumerate}
It is also interesting to note that in \cite{KK1}, the bound on the upper index was the harder part of the proof. Here the complete opposite is true; the lower bound is much harder and requires the second point mentioned above which is entirely novel. As far as we know, Gaussian estimates for Dirichlet Laplacian have never been used in the study of stochastic partial differential differential equations.  Using these two ideas, we were able to improve the bound on the lower index.

It turns out that our method can adapted  to study the same stochastic PDE but with Neumann boundary condition.  We now describe our main findings in this context.  Consider the following equation

\begin{equation}\label{eq:neumann}
\left|\begin{split}
&\partial_t u_t(x)=\frac{1}{2}\Delta u_t(x)+\lambda \sigma(u_t(x))\dot{w}(t,\,x),\\
&\partial_x u_t(0)=0, \quad \partial_xu_t(L)=0.
\end{split}
\right.
\end{equation}

Here we stress the fact that as opposed to \cite{KK1}, we do not require our initial function to be bounded below.  Any bounded nonrandom compactly supported non negative initial function will be enough.  It is well known that there exists a unique mild solution satisfying \eqref{moments} with the following integral representation,

\begin{equation}\label{mild:neumann}
u_t(x)=
(\sG_Nu)_t(x)+ \lambda \int_0^L\int_0^t p_N(t-s,\,x,\,y)\sigma(u_s(y))w(\d s\,\d y),
\end{equation}
where
\begin{equation*}
(\sG_N u)_t(x):=\int_0^L u_0(y)p_N(t,\,x,\,y)\,\d y
\end{equation*}
and $p_N(t,\,x,\,y)$ is the Neumann heat kernel.  We again refer to \cite{Minicourse} and \cite{Walsh} for more information about various technicalities.  To state our main result for \eqref{eq:neumann}, we set the following notations,
\begin{equation}\label{inf}
\sI_t(\lambda):=\inf_{x\in [0,\,L]}\E|u_t(x)|^2
\end{equation}
and
\begin{equation}\label{sup}
\sS_t(\lambda):=\sup_{x\in [0,\,L]}\E|u_t(x)|^2,
\end{equation}
where $u_t$ is the solution to \eqref{eq:neumann}.
\begin{theorem}\label{theo:neu}
Fix $t>0$, then
\begin{equation*}
\liminf_{\lambda\rightarrow \infty} \frac{\log\log \sI_t(\lambda)}{\log \lambda} =\limsup_{\lambda\rightarrow \infty} \frac{\log\log \sS_t(\lambda)}{\log \lambda}=4.
\end{equation*}
\end{theorem}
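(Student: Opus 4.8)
The plan is to mirror the strategy used for the Dirichlet problem, but replacing the Dirichlet heat kernel by the Neumann kernel $p_N(t,x,y)$, which enjoys better near-boundary behaviour (it does not vanish at the endpoints, and in fact $\int_0^L p_N(t,x,y)\,\d y = 1$ for all $t,x$). The two quantities $\sI_t(\lambda)$ and $\sS_t(\lambda)$ satisfy $\sI_t(\lambda)\le \sS_t(\lambda)$, so it suffices to establish a lower bound of the form $\sI_t(\lambda)\ge c_1\exp(c_2\lambda^4)$ and an upper bound $\sS_t(\lambda)\le C_1\exp(C_2\lambda^4)$; taking $\log\log$, dividing by $\log\lambda$, and letting $\lambda\to\infty$ then pins both indices to $4$.

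For the upper bound, I would apply Walsh's isometry to \eqref{mild:neumann}, giving
\begin{equation*}
\E|u_t(x)|^2 = |(\sG_N u)_t(x)|^2 + \lambda^2\int_0^t\int_0^L p_N(t-s,x,y)^2\,\E|\sigma(u_s(y))|^2\,\d y\,\d s.
\end{equation*}
Using $|\sigma(z)|^2\le L_\sigma^2 z^2$, boundedness of $(\sG_N u)_t$, and the on-diagonal bound $p_N(r,x,y)\le p_N^{\mathrm{free}}(r,x,y)+(\text{lower-order})\le C r^{-1/2}$ (valid for $r$ in a bounded interval), one gets
\begin{equation*}
\sS_t(\lambda)\le C + C\lambda^2\int_0^t (t-s)^{-1/2}\,\sS_s(\lambda)\,\d s.
\end{equation*}
This is a renewal-type inequality with a weakly singular kernel $(t-s)^{-1/2}$; iterating it (or invoking the renewal lemma already set up earlier in the paper for the Dirichlet case) yields $\sS_t(\lambda)\le C_1\exp(C_2\lambda^4)$ — the fourth power arising exactly because the half-power singularity produces, after $n$ iterations, a factor involving $\Gamma(n/2)$, whose asymptotics against $(C\lambda^2)^n$ sum to $\exp(\mathrm{const}\cdot\lambda^4)$.

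For the lower bound I would again use Walsh's isometry, this time keeping only the stochastic term and using $|\sigma(z)|^2\ge l_\sigma^2 z^2$:
\begin{equation*}
\E|u_t(x)|^2 \ge l_\sigma^2\lambda^2\int_0^t\int_0^L p_N(t-s,x,y)^2\,\E|u_s(y)|^2\,\d y\,\d s \ge l_\sigma^2\lambda^2\int_0^t\Bigl(\inf_{z}\int_0^L p_N(t-s,z,y)^2\,\d y\Bigr)\sI_s(\lambda)\,\d s.
\end{equation*}
The crucial input here — the novel point flagged in the introduction — is a lower bound $\int_0^L p_N(r,z,y)^2\,\d y\ge c\,r^{-1/2}$ uniform in $z\in[0,L]$ for $r$ in a bounded range. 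For the free kernel on the line this is an exact Gaussian computation; for the Neumann kernel on $[0,L]$ one shows, by the method of images or an eigenfunction comparison, that $p_N(r,z,y)$ dominates a constant multiple of the free kernel on an $O(\sqrt r)$-neighbourhood of $z$, which for small $r$ lies inside $[0,L]$ (and near the boundary the reflected image only helps). Integrating the square over that neighbourhood gives the $c\,r^{-1/2}$ lower bound with a constant independent of $z$ — this is where avoiding the "bounded below initial data" hypothesis of \cite{KK1} pays off, since we never need the deterministic term to be large. The resulting lower renewal inequality $\sI_t(\lambda)\ge c\lambda^2\int_0^t(t-s)^{-1/2}\sI_s(\lambda)\,\d s$ (with a nonzero starting value coming from a short initial interval on which $\E|u_s(y)|^2$ is bounded below by the contribution of $(\sG_N u)_s$) then gives $\sI_t(\lambda)\ge c_1\exp(c_2\lambda^4)$ by the same renewal asymptotics.

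The main obstacle is the uniform-in-$x$ two-sided control of $\int_0^L p_N(r,x,y)^2\,\d y$ near the boundary: away from the boundary the Neumann kernel is a small perturbation of the Gaussian and everything is routine, but at $x=0$ or $x=L$ one must check that the reflected images reinforce rather than cancel (they do, for Neumann) and that the $r^{-1/2}$ behaviour is not degraded. In dimension one this can be done by hand using the explicit image series $p_N(r,x,y)=\sum_{n\in\Z}\bigl[p^{\mathrm{free}}(r,x,y+2nL)+p^{\mathrm{free}}(r,x,-y+2nL)\bigr]$, estimating the $n=0$ term from below and bounding the tail. Once this Gaussian-type estimate is in place, the renewal machinery already developed for Theorem~\ref{theo:dirich:energy} applies essentially verbatim.
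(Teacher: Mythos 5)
Your proposal is correct and follows essentially the same route as the paper: Walsh's isometry applied to \eqref{mild:neumann}, the linear bounds $l_\sigma|z|\le|\sigma(z)|\le L_\sigma|z|$, a two-sided Gaussian comparison for $p_N$, and the resulting renewal inequalities with kernel $(t-s)^{-1/2}$ fed into Propositions \ref{upperbound} and \ref{lowerbound}. The only (cosmetic) difference is that the ``main obstacle'' you flag --- uniform-in-$x$ two-sided control of $\int_0^L p_N(r,x,y)^2\,\d y$ --- is dispatched in the paper by the semigroup identity $\int_0^L p_N(r,x,y)^2\,\d y=p_N(2r,x,x)$ together with the immediate image-series bounds $p\le p_N\le c_T\,p$, so no separate near-boundary localization argument is needed.
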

An immediate consequence of the above is the following.
\begin{corollary}\label{corr:energy}
The nonlinear excitation index of the solution to \eqref{eq:neumann} is $4$.
\end{corollary}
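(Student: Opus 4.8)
The plan is to obtain Corollary~\ref{corr:energy} as a short consequence of Theorem~\ref{theo:neu}: the $L^2$-energy $\sE_t(\lambda)$ is trapped between $\sqrt{L\,\sI_t(\lambda)}$ and $\sqrt{L\,\sS_t(\lambda)}$, and a discrepancy of this size is invisible once one applies the double logarithm. First I would use Tonelli's theorem together with the definitions \eqref{energy}, \eqref{inf} and \eqref{sup} to write
\begin{equation*}
\sE_t(\lambda)^2=\E\left(\|u_t\|_{L^2[0,\,L]}^2\right)=\int_0^L \E|u_t(x)|^2\,\d x ,
\end{equation*}
which, since all the moments are finite by \eqref{moments}, gives $L\,\sI_t(\lambda)\le\sE_t(\lambda)^2\le L\,\sS_t(\lambda)$, that is,
\begin{equation*}
\sqrt{L\,\sI_t(\lambda)}\;\le\;\sE_t(\lambda)\;\le\;\sqrt{L\,\sS_t(\lambda)} .
\end{equation*}

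Next I would pass to iterated logarithms. Theorem~\ref{theo:neu} gives $\liminf_{\lambda\to\infty}(\log\log\sI_t(\lambda))/\log\lambda=4>0$, so in particular $\log\log\sI_t(\lambda)\to\infty$, hence $\sI_t(\lambda)\to\infty$ and a fortiori $\sS_t(\lambda)\ge\sI_t(\lambda)\to\infty$ as $\lambda\to\infty$. Consequently, for all sufficiently large $\lambda$,
\begin{equation*}
\log\log\sE_t(\lambda)\le\log\!\left(\tfrac12\log L+\tfrac12\log\sS_t(\lambda)\right)=\log\log\sS_t(\lambda)-\log 2+o(1),
\end{equation*}
where the last equality uses $\log L/\log\sS_t(\lambda)\to0$; symmetrically one gets $\log\log\sE_t(\lambda)\ge\log\log\sI_t(\lambda)-\log 2+o(1)$. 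Dividing by $\log\lambda$ and passing to the limit, the additive $\log 2$ and the $o(1)$ drop out, so that $\overline{e}(t)\le\limsup_{\lambda\to\infty}(\log\log\sS_t(\lambda))/\log\lambda=4$ and $\underline{e}(t)\ge\liminf_{\lambda\to\infty}(\log\log\sI_t(\lambda))/\log\lambda=4$, again by Theorem~\ref{theo:neu}.

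Finally, since $\underline{e}(t)\le\overline{e}(t)$ always holds, the two bounds above force $4\le\underline{e}(t)\le\overline{e}(t)\le4$, so $\underline{e}(t)=\overline{e}(t)=4$; the common value is strictly positive, which is precisely the assertion that the nonlinear noise excitation index of the solution to \eqref{eq:neumann} is $4$. I do not anticipate any genuine obstacle: all the analytic work is already contained in Theorem~\ref{theo:neu}, and the only point requiring a little care is that the constant $L$, the factor $\tfrac12$, and the gap between $\sI_t(\lambda)$ and $\sS_t(\lambda)$ are all absorbed after applying $\log\log$, which is legitimate exactly because the second moments blow up as $\lambda\to\infty$.
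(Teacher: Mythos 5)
Your proposal is correct and follows essentially the same route as the paper: sandwiching $\sE_t(\lambda)^2=\int_0^L\E|u_t(x)|^2\,\d x$ between $L\,\sI_t(\lambda)$ and $L\,\sS_t(\lambda)$ and then invoking Theorem \ref{theo:neu}. The paper's own proof is just this two-line sandwich (and in fact your version is slightly more careful, both in spelling out why the constant $L$ is harmless under the double logarithm and in not introducing the spurious squares on $\sI_t$ and $\sS_t$ that appear in the paper's displayed inequalities).
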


Our technique seems to be suited for the study of a wider class of the stochastic equations.  If the Laplacian in say \eqref{eq:dirichlet} were replaced by the fractional Dirichlet Laplacian of order $\alpha$ and the white noise were replaced by a colored noise with Riesz Kernel of order $\beta$, we conjecture that the non-linear excitation index is $2\alpha/(\alpha-\beta)$. This is currently under investigation and will be the subject of \cite{FT}. 

We end this introduction with the plan of the article.  Section 2 contains the renewal type inequalities.  Section 3 contains the relevant Dirichlet heat estimates and the proof of Theorem \ref{theo:dirich:energy}. Section 4 contains the corresponding estimates for the Neuman heat kernel as well as the  proof of Theorem \ref{theo:neu} and its corollary.

\section{Some estimates}
This section will be devoted to the renewal-type inequalities mentioned in the introduction.  The perceptive reader will recognise  that the presence of the square root inside the integrals is motivated by the Gaussian heat kernel.

\begin{proposition}\label{upperbound}
Suppose that $f(t),$ is a non-negative integrable function on $0\leq t\leq T$ satisfying

\begin{equation}\label{upper}
f(t)\leq a+bk\int_0^t\frac{f(s)}{\sqrt{t-s}}\,\d s\quad \text{for\,all} \quad k>0\quad \text{and}\quad 0\leq t\leq T,
\end{equation}
where $a$ and $b$ are positive constants and $T< \infty$. Then for each $0\le t\le T$, we have
\begin{equation*}
\limsup_{k\rightarrow \infty}\frac{\log \log f(t)}{\log k}\leq 2.
\end{equation*}

\end{proposition}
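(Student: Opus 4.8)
The plan is to iterate the inequality \eqref{upper} and track how the constant grows with $k$. Writing $I$ for the integral operator $(If)(t) = \int_0^t \frac{f(s)}{\sqrt{t-s}}\,\d s$, the hypothesis says $f \le a + bk\, If$ pointwise on $[0,T]$. Since $I$ is monotone on non-negative functions, repeated substitution gives, for every $n \ge 1$,
\begin{equation*}
f(t) \le a\sum_{j=0}^{n-1} (bk)^j (I^j \1)(t) + (bk)^n (I^n f)(t),
\end{equation*}
where $\1$ is the constant function $1$. The key computational input is the explicit form of the iterated kernel: because $\int_0^t (t-s)^{-1/2} s^{\alpha-1}\,\d s = B(\alpha, 1/2)\, t^{\alpha - 1/2}$, one shows by induction that
\begin{equation*}
(I^j \1)(t) = \frac{\Gamma(1/2)^j}{\Gamma(j/2 + 1)}\, t^{j/2},
\end{equation*}
and likewise $(I^n f)(t) \le \|f\|_{L^1[0,T]} \cdot C \cdot \Gamma(1/2)^{n-1}/\Gamma((n-1)/2 + 1) \cdot$ (a bounded-in-$t$ factor). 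The Gamma function in the denominator grows super-exponentially in $n$, so the remainder term $(bk)^n (I^n f)(t) \to 0$ as $n \to \infty$ for each fixed $k$ and $t$. Hence
\begin{equation*}
f(t) \le a\sum_{j=0}^{\infty} \frac{(bk\sqrt{\pi})^j}{\Gamma(j/2+1)}\, t^{j/2}.
\end{equation*}

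The remaining step is to estimate this series. It is (up to constants) a Mittag-Leffler-type function of parameter $1/2$, and the classical asymptotics of $\sum_j z^j/\Gamma(j/2+1)$ give growth of order $\exp(C z^2)$ as $z \to \infty$; alternatively, splitting into even and odd $j$ turns each piece into an ordinary exponential series in $z^2$, so that $\sum_j (bk\sqrt{\pi t})^j/\Gamma(j/2+1) \le C_1 \exp(C_2 b^2 k^2 t)$ for suitable constants depending only on $b$ and an upper bound for $t$. Combining, $f(t) \le C_1 a \exp(C_2 b^2 k^2 t)$, whence $\log f(t) \le \log(C_1 a) + C_2 b^2 t\, k^2$, and therefore
\begin{equation*}
\limsup_{k \to \infty} \frac{\log\log f(t)}{\log k} \le \limsup_{k\to\infty}\frac{\log(C_3 k^2)}{\log k} = 2.
\end{equation*}

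The main obstacle is bookkeeping rather than conceptual: one must carry the Beta-integral induction cleanly (keeping track of the bounded-in-$t$ prefactors, which is where the assumption $T < \infty$ is used) and then invoke the correct asymptotics for the resulting entire function of order $2$. A minor subtlety is that $f$ is only assumed integrable, not bounded, so the remainder term must be controlled through $\|f\|_{L^1}$ and the smoothing of a single application of $I$ rather than through a sup-norm; this is harmless but should be stated carefully. No lower bound is needed here, so there is no difficulty in that direction.
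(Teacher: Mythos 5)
Your argument is correct and rests on exactly the same mechanism as the paper's proof: iterate the inequality and use the Beta integral $\int_l^t\bigl((t-s)(s-l)\bigr)^{-1/2}\,\d s=\pi$ to tame the square-root singularity, arriving at a bound of the form $f(t)\leq C\exp(Ck^2t)$. The only difference is one of packaging --- the paper iterates once and then invokes a version of Gronwall's inequality, whereas you carry the Picard iteration to infinity and sum the resulting Mittag--Leffler series explicitly (your remainder bound should really use two applications of the operator $I$, since $I^2f\leq\pi\|f\|_{L^1}$ is what converts integrability into boundedness, but as you note this is harmless bookkeeping).
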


\begin{proof}
We start off by iterating inequality \eqref{upper} once to obtain \begin{eqnarray*}
f(t)&\leq& a+bk\int_0^t\frac{f(s)}{\sqrt{t-s}}\,\d s\\
&=&a+abk \sqrt{t}+b^2k^2\int_0^t\int_0^s\frac{f(l)}{\sqrt{(t-s)(s-l)}}\d l\d s.
\end{eqnarray*}
We change the order of integration in the above double integral to find that
\begin{eqnarray*}
\int_0^t\int_0^s\frac{f(l)}{\sqrt{(t-s)(s-l)}}\d l\d s\hskip1.0in\\
=\int_0^t\int_l^t\frac{f(l)}{\sqrt{(t-s)(s-l)}}\d s\d l\\
=c_1\int_0^tf(l)\d l,\hskip0.75in
\end{eqnarray*}
where $c_1$ is some positive constant. This together with the above inequality gives

\begin{equation*}
f(t)\leq c_2+c_2k\sqrt{t}+c_2k^2\int_0^tf(l)\d l,
\end{equation*}
for some positive constant $c_2$.  A suitable version of Gronwall's inequality now finishes the proof of the proposition.
\end{proof}

We now reverse the inequality in the statement of the proposition to obtain the following result.  The proof is pretty much the same as the above, so we omit it.

\begin{proposition}\label{lowerbound}
Suppose  that $f(t)$ is a non-negative integrable functions on $0\leq t< T$ satisfying
\begin{equation}\label{lower}
f(t)\geq a+bk\int_0^t\frac{f(s)}{\sqrt{t-s}}\,\d s\quad \text{for\,all} \quad k>0\quad \text{and}\quad 0\leq t\leq T,
\end{equation}
where $a$ and $b$ are positive constants and $T< \infty$. Then for each $0\le t\le T$, we have
\begin{equation*}
\liminf_{k\rightarrow \infty}\frac{\log \log f(t)}{\log k}\geq 2.
\end{equation*}
\end{proposition}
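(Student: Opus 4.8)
The plan is to follow the proof of Proposition \ref{upperbound} essentially verbatim, reversing every inequality and replacing the closing appeal to Gronwall's inequality by its reverse (lower) version. Writing $(\sA g)(t):=\int_0^t(t-s)^{-1/2}g(s)\,\d s$, the hypothesis reads $f\ge a+bk\,\sA f$. Substituting this bound into itself once gives
\begin{equation*}
f(t)\ge a+abk\int_0^t\frac{\d s}{\sqrt{t-s}}+b^2k^2\int_0^t\int_0^s\frac{f(l)}{\sqrt{(t-s)(s-l)}}\,\d l\,\d s ,
\end{equation*}
where $\int_0^t(t-s)^{-1/2}\,\d s=2\sqrt t$. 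Exactly as before, Fubini turns the double integral into $\int_0^t f(l)\bigl(\int_l^t(t-s)^{-1/2}(s-l)^{-1/2}\,\d s\bigr)\d l$, and the substitution $s=l+(t-l)u$ evaluates the inner integral as the Beta integral $B(\tfrac12,\tfrac12)=\pi$, independent of $l$ and $t$. Discarding the non-negative term $2abk\sqrt t$, I obtain the linear renewal inequality
\begin{equation*}
f(t)\ge a+\pi b^2k^2\int_0^t f(l)\,\d l,\qquad 0\le t\le T .
\end{equation*}

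Next comes the reverse-Gronwall step, which is the only place that needs a little care, since Gronwall-type arguments usually produce upper rather than lower bounds. Put $F(t):=\int_0^t f(l)\,\d l$, so that $F$ is absolutely continuous with $F'=f$ a.e.\ and $F(0)=0$; the last display says $F'(t)\ge a+\pi b^2k^2F(t)$. Multiplying by the integrating factor $e^{-\pi b^2k^2 t}$ and integrating from $0$ to $t$ yields $F(t)\ge \tfrac{a}{\pi b^2k^2}\bigl(e^{\pi b^2k^2 t}-1\bigr)$, and plugging this back into $f(t)\ge a+\pi b^2k^2F(t)$ collapses to the clean bound
\begin{equation*}
f(t)\ge a\,e^{\pi b^2k^2 t},\qquad 0\le t\le T,\ k>0 .
\end{equation*}
The decisive point is that the additive constant $a$ survives the iteration intact; without it the reverse Gronwall estimate would be vacuous — as indeed it is at $t=0$.

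Finally, fix $t\in(0,T]$. From the previous display $\log f(t)\ge\log a+\pi b^2 t\,k^2$, which is positive and of order $k^2$ for large $k$, so
\begin{equation*}
\frac{\log\log f(t)}{\log k}=\frac{\log(\pi b^2 t)+2\log k+o(1)}{\log k}\longrightarrow 2\quad\text{as }k\to\infty ,
\end{equation*}
and in particular the liminf is at least $2$, as claimed. I do not anticipate any real obstacle: the work is the same bookkeeping as in Proposition \ref{upperbound}, and the two spots to watch are the evaluation of the Beta integral (identical to the upper-bound computation) and confirming that the reverse Gronwall inequality is legitimately applied, i.e.\ that the positive constant $a$ is genuinely present after one iteration.
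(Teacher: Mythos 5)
Your argument is exactly the mirrored version of the paper's proof of Proposition \ref{upperbound} (iterate once, Fubini/Beta integral, then a reverse Gronwall step), which is precisely what the paper means when it says the proof is ``pretty much the same'' and omits it. The details are correct — including the useful observation that the constant $a$ must survive the iteration for the reverse Gronwall bound $f(t)\ge ae^{\pi b^2k^2t}$ to be non-vacuous — so nothing further is needed.
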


\begin{remark}\label{renew}
Obviously, the constants $a$ and $b$ appearing in Proposition \ref{upperbound} might be different from those appearing in Proposition \ref{lowerbound}.  In a sense, the above results are not new. But what's original about them here, is that they are used to get information about the growth of the function with respect to the parameter $k$ rather than $t$.
\end{remark}

\section{The Dirichlet equation.}

We start off with a result which gives a lower bound on the Dirichlet heat kernel in terms of the Gaussian heat kernel.  This is borrowed from \cite{Berg}. But we give a proof here for the sake of completeness.  Recall that from the method of images, we have the following representation,

\begin{equation}\label{images-1}
p_D(t,\,x,\,y)=\frac{1}{\sqrt{4\pi t}}\sum_{n=-\infty}^{\infty}\Big[e^{-\frac{|x-(y+2nL)|^2}{4t}}-e^{-\frac{|x-(-y+2nL)|^2}{4t}}\Big].
\end{equation}

\begin{lemma}
Suppose that $x,\,y\in (0, L)$ and set $\epsilon:=\min\{x, y, L-x, L-y\}$, then we have
\begin{equation*}
p_D(t,\,x,\,y)\geq (1-2e^{-\epsilon^2/t})p(t,\,x,\,y).
\end{equation*}
\end{lemma}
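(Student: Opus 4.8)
The plan is to start from the method-of-images representation \eqref{images-1} and isolate the ``principal'' term, namely the $n=0$ term of the first sum, which is exactly $p(t,x,y) = \frac{1}{\sqrt{4\pi t}} e^{-|x-y|^2/(4t)}$. Everything else must be controlled as a relative error. So I would write
\begin{equation*}
p_D(t,x,y) = p(t,x,y) - \frac{1}{\sqrt{4\pi t}} e^{-|x+y|^2/(4t)} + \frac{1}{\sqrt{4\pi t}}\sum_{n\neq 0}\Big[e^{-\frac{|x-y-2nL|^2}{4t}} - e^{-\frac{|x+y-2nL|^2}{4t}}\Big],
\end{equation*}
using that the $n=0$ term of the second sum is $e^{-|x+y|^2/(4t)}$. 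The goal is then to bound the absolute value of the correction (the last two groups of terms) by $2e^{-\epsilon^2/t}\,p(t,x,y)$, i.e. to show each stray exponential is at most $e^{-\epsilon^2/t}$ times $e^{-|x-y|^2/(4t)}$ after suitable bookkeeping.

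The key elementary fact I would use repeatedly is that for the reflected points the squared distances are bigger than $|x-y|^2$ by a definite amount governed by $\epsilon$. Concretely, $|x+y|^2 - |x-y|^2 = 4xy \ge 4\epsilon^2$ (since $x,y\ge\epsilon$), giving $e^{-|x+y|^2/(4t)} \le e^{-\epsilon^2/t} e^{-|x-y|^2/(4t)}$; wait—I should be careful with the constant: $4xy/(4t) = xy/t \ge \epsilon^2/t$, so in fact this term is bounded by $e^{-\epsilon^2/t}p(t,x,y)\cdot\sqrt{4\pi t}$, good. For $n\ge 1$, one checks $|x-y-2nL| \ge 2nL - |x-y| \ge 2L - (L-\epsilon) = L+\epsilon \ge$ (something comparable to $\epsilon$ plus $|x-y|$-type gap); more usefully, a convexity/parallelogram estimate shows $|x-y-2nL|^2 \ge |x-y|^2 + (\text{gap})$, and similarly for the $|x+y-2nL|^2$ terms, so that the whole tail $n\neq 0$ sums to at most a geometric series in $e^{-cL^2/t}$ which is itself $\le e^{-\epsilon^2/t}$ times $p(t,x,y)$ (using $\epsilon \le L$). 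Then the two pieces together contribute at most $2e^{-\epsilon^2/t}p(t,x,y)$ in absolute value, and since they enter with a minus/mixed sign the bound $p_D \ge (1-2e^{-\epsilon^2/t})p(t,x,y)$ follows.

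The main obstacle is the bookkeeping on the reflected-lattice sum: one must show that for \emph{every} $n\neq 0$ the two squared distances $|x\pm y - 2nL|^2$ each exceed $|x-y|^2$ by at least a margin that (a) is at least $\epsilon^2$ worth when exponentiated and divided by $4t$, and (b) grows fast enough in $|n|$ to make the sum over $n$ converge to something still dominated by a single factor $e^{-\epsilon^2/t}$. The cleanest route is to note $\min\{|x-y-2nL|, |x+y-2nL|\} \ge 2|n|L - \max\{|x-y|,|x+y|\}$ and, for $|n|\ge 1$, to split off one factor comparable to $e^{-\epsilon^2/t}$ (using $\epsilon\le L$) and leave a residual $\sum_{n\ge 1} e^{-c n^2 L^2/t}$ that is bounded by an absolute geometric-type constant, which one then absorbs — the only subtlety being to make the constants line up so the final coefficient is exactly $2$ rather than some larger number. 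Since we are in one spatial dimension all of this is genuinely elementary and amounts to a careful estimate of a single sum of Gaussians; I would not expect any conceptual difficulty beyond getting the constants right, which is presumably why the paper attributes the statement to \cite{Berg} and includes the proof only for completeness.
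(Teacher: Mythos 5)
Your opening move is right: isolate the $n=0$ Gaussian and the $n=0$ reflection term $e^{-|x+y|^2/(4t)}$, and control the latter via $|x+y|^2-|x-y|^2=4xy\ge 4\epsilon^2$. But there is a genuine gap in how you treat the rest. The sum over $n\neq 0$ is \emph{not} dominated by a geometric series in $e^{-cL^2/t}$: the $n=1$ negative image $e^{-|x+y-2L|^2/(4t)}$ is the reflection across the endpoint $L$, and since $|x+y-2L|^2=|x-y|^2+4(L-x)(L-y)$ it can be as large as $e^{-\epsilon^2/t}e^{-|x-y|^2/(4t)}$ (take $x=y=L-\epsilon$, where it equals exactly that). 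Your bound $\min\{|x-y-2nL|,|x+y-2nL|\}\ge 2|n|L-\max\{|x-y|,|x+y|\}$ gives only $2L-(x+y)=(L-x)+(L-y)\ge 2\epsilon$ for this term, nothing of order $L$. So this term must be extracted and bounded separately by $e^{-(L-x)(L-y)/t}\,e^{-|x-y|^2/(4t)}\le e^{-\epsilon^2/t}e^{-|x-y|^2/(4t)}$, exactly in parallel with the $n=0$ reflection; that is where the factor $2$ in the statement comes from (one boundary point each), and it is the step your decomposition misses — note you never use the hypothesis $L-x,L-y\ge\epsilon$ at all, which is the telltale sign.

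Even after extracting both near reflections, the remaining far images should not be handled by absolute values: that would add a further (possibly not small, e.g.\ for $t\gtrsim L^2$) error and inflate the constant beyond $2$. The paper instead regroups the $n\ge 1$ terms so that each remaining negative image is paired with a positive image at least as close to nothing, i.e.\ uses $|x+y+2nL|\ge|x-y+2nL|$ and $|{-(x+y)}+2(n+1)L|\ge|2nL-(x-y)|$ to conclude the entire residual tail is \emph{nonnegative} and can simply be discarded. This sign/cancellation structure is the missing idea; with it the proof is three lines and the constant is exactly $2$ with no smallness assumption on $t$.
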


\begin{proof}
The proof involves rewriting \eqref{images-1} in a suitable way and making use of  the following observation. For $n\geq 1$ and $x, y\in(0,L)$

\begin{equation}\label{ob-1}
|x+y+2nL|\geq|x-y+2nL|
\end{equation}
and

\begin{equation}\label{ob-2}
|-(x+y)+2(n+1)L|\geq|2nL-(x-y)|.
\end{equation}
We can now write
\begin{equation*}
\begin{aligned}
\sum_{n=-\infty}^{\infty}\Big[e^{-\frac{|x-(y+2nL)|^2}{4t}}&-e^{-\frac{|x-(-y+2nL)|^2}{4t}}\Big]\\
&=e^{-\frac{|x-y|^2}{4t}}-e^{-\frac{|x+y|^2}{4t}}-e^{-\frac{|x+y-2L|^2}{4t}}\\
&\;\;+\sum_{n=1}^\infty\Big[e^{-\frac{|x-y-2nL|^2}{4t}}+e^{-\frac{|x-y+2nL|^2}{4t}}\\
&\qquad \qquad -e^{-\frac{|x+y-2(n+1)L)|^2}{4t}}-e^{-\frac{|x+y+2nL|^2}{4t}}\Big].
\end{aligned}
\end{equation*}

We now use \eqref{ob-1} and \eqref{ob-2} together with \eqref{images-1} to conclude that

\begin{equation*}
\begin{aligned}
p_D(t,\,x,\,y)&\geq\frac{1}{\sqrt{4\pi t}}\left[e^{-\frac{|x-y|^2}{4t}}-e^{-\frac{|x+y|^2}{4t}}-e^{-\frac{|x+y-2L|^2}{4t}}\right]\\
&\geq\frac{1}{\sqrt{4\pi t}}e^{-\frac{|x-y|^2}{4t}}\left[1-e^{-\frac{xy}{t}}-e^{-\frac{(L-x)(L-y)}{t}}\right].
\end{aligned}
\end{equation*}
This and the definition of $\ve$ essentially finish the proof.
\end{proof}

A consequence of the above lemma is that away from the boundary, the Dirichlet heat kernel behaves pretty much like the Gaussian one provided that time is small enough.  This is intuitively obvious from the probabilistic point of view.

\begin{corollary}\label{cor:comp-dirich}
Fix $\epsilon>0$, then there exists $t_0>0$ depending on $\epsilon$ such that for all $t\leq t_0$ and all $x,\,y\in [\epsilon, L-\epsilon]$, we have
\begin{equation*}
p_D(t,\,x,\,y)\geq \frac{1}{2}p(t,\,x,\,y).
\end{equation*}

\end{corollary}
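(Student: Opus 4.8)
The plan is to read Corollary~\ref{cor:comp-dirich} as a quantitative strengthening of the preceding Lemma, specialized to points bounded away from the boundary. Recall that the Lemma gives $p_D(t,x,y)\ge (1-2e^{-\ve^2/t})p(t,x,y)$ with $\ve=\min\{x,y,L-x,L-y\}$ there being the distance of the pair $(x,y)$ to the boundary. First I would observe that if both $x,y\in[\epsilon,L-\epsilon]$ (here $\epsilon$ is the fixed constant in the Corollary's statement, which plays the role of a lower bound on that distance), then $\min\{x,y,L-x,L-y\}\ge \epsilon$, so the quantity $e^{-(\min\{x,y,L-x,L-y\})^2/t}$ is at most $e^{-\epsilon^2/t}$ for every such pair. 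Thus uniformly over $x,y\in[\epsilon,L-\epsilon]$ we get
\begin{equation*}
p_D(t,x,y)\ge \bigl(1-2e^{-\epsilon^2/t}\bigr)\,p(t,x,y).
\end{equation*}

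The second step is purely elementary: choose $t_0=t_0(\epsilon)>0$ small enough that $1-2e^{-\epsilon^2/t}\ge \tfrac12$ for all $0<t\le t_0$. Since $t\mapsto e^{-\epsilon^2/t}$ is increasing on $(0,\infty)$ with limit $0$ as $t\downarrow 0$, it suffices to take any $t_0$ with $2e^{-\epsilon^2/t_0}\le \tfrac12$, e.g. $t_0=\epsilon^2/\log 4$. For every $t\le t_0$ and every $x,y\in[\epsilon,L-\epsilon]$ we then conclude $p_D(t,x,y)\ge \tfrac12 p(t,x,y)$, which is exactly the claim. (One should note $p(t,x,y)\ge 0$, so multiplying the inequality through by it preserves direction; this is automatic since $p$ is the Gaussian kernel.)

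There is essentially no obstacle here: the only subtlety is the mild abuse of notation whereby the symbol $\epsilon$ in the Lemma denotes a pair-dependent quantity while in the Corollary it denotes a fixed constant, and the bridge between them is the monotonicity observation $\min\{x,y,L-x,L-y\}\ge\epsilon$ on the stated domain together with monotonicity of $s\mapsto e^{-\epsilon^2/s}$. If anything needs care it is simply making this substitution explicit so the reader sees that the Lemma's bound degrades worst at the corners of the square $[\epsilon,L-\epsilon]^2$, and that worst case is still controlled by a single exponential in $1/t$ depending only on $\epsilon$. The whole proof is two or three lines.
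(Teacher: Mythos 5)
Your proof is correct and follows essentially the same route as the paper: invoke the preceding Lemma, note that $\min\{x,y,L-x,L-y\}\ge\epsilon$ on $[\epsilon,L-\epsilon]^2$, and take $t_0=\epsilon^2/\ln 4$ so that $1-2e^{-\epsilon^2/t}\ge\tfrac12$. The paper's proof is just a terser version of the same argument (it even uses the same choice of $t_0$).
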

\begin{proof}
Fix $\epsilon>0$. For $t\leq \frac{\epsilon^2}{\ln 4}$, we have $1-2e^{-\epsilon^2/t}\geq \frac{1}{2}$.  The result then follows from the above lemma.
\end{proof}

Another starting point for the proof of the above result could be the following. Recall that the Dirichlet heat kernel is the transition probability for a killed Brownian motion. Let $\tau$ denote the first exit time of Brownian motion from from $(0,\,L)$, then since $p(t,\,x,\,y)$ is the transition probability of this Brownian motion, we have
\begin{equation*}
p_D(t,\,x,\,y)=\P^x(\tau>t|B_t=y)p(t,\,x,\,y).
\end{equation*}
We also have
\begin{equation*}
p_D(t,\,x,\,y)=p(t,\,x,\,y)-\E^x(p(t-\tau,\,X_D,\,y); \tau<t),
\end{equation*}
where $X_D$ is the position of the Brownian motion when it hits the boundary, making the following trivial.
\begin{lemma}
For all $x,\,y\in (0,\,L)$ and $t>0$, the following holds
\begin{equation}\label{dirichupper}
p_D(t,\,x,\,y)\leq p(t,\,x,\,y).
\end{equation}
\end{lemma}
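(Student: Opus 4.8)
The plan is to read off \eqref{dirichupper} directly from the probabilistic description of $p_D$ as the transition density of Brownian motion killed on leaving $(0,L)$ — precisely the picture set up in the two displays immediately preceding the statement. Let $(B_s)_{s\ge 0}$ be a standard Brownian motion, let $\tau$ be its first exit time from $(0,L)$, and recall that on the event $\{\tau<t\}$ one has $B_\tau\in\{0,L\}$.

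First I would justify the last-exit decomposition
\[
p_D(t,x,y)=p(t,x,y)-\E^x\!\left(p(t-\tau,B_\tau,y);\,\tau<t\right),
\]
which comes from splitting according to $\{\tau\ge t\}$ versus $\{\tau<t\}$ and applying the strong Markov property of $B$ at the stopping time $\tau$: on $\{\tau<t\}$ the conditional law of $B_t$ given $\mathcal F_\tau$ has density $p(t-\tau,B_\tau,\cdot)$, while on $\{\tau\ge t\}$ the path has stayed inside $(0,L)$ and contributes exactly $p_D(t,x,y)$. Since the Gaussian heat kernel is non-negative, $p(s,a,b)\ge 0$ for all $s>0$ and all $a,b$, the subtracted expectation above is non-negative, and \eqref{dirichupper} follows at once. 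Equivalently, in one line: from $p_D(t,x,y)=\P^x(\tau>t\mid B_t=y)\,p(t,x,y)$ the bound is immediate, a conditional probability being at most $1$ and $p(t,x,y)$ being non-negative.

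I expect no genuine obstacle here — the authors themselves flag the statement as trivial. The only point deserving a word of care is the measure-theoretic bookkeeping behind the two displayed identities, namely the existence of a regular conditional density for $B_t$ given $\mathcal F_\tau$ together with the strong Markov property at $\tau$; for one-dimensional Brownian motion on a bounded interval these are standard. If one preferred to avoid probability altogether, an alternative is the parabolic minimum principle applied to $v(t,x):=p(t,x,y)-p_D(t,x,y)$, which solves the heat equation on $(0,L)$ with non-negative boundary data $v(t,0)=p(t,0,y)$, $v(t,L)=p(t,L,y)$ and zero initial data away from $y$; yet another is to regroup the image series \eqref{images-1} term by term, pairing each reflected Gaussian against a suitable direct one (as in \eqref{ob-1}--\eqref{ob-2}), so as to bound the Dirichlet sum by its $n=0$ contribution $e^{-|x-y|^2/4t}/\sqrt{4\pi t}=p(t,x,y)$. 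The probabilistic argument is the shortest and fits the discussion already in the text.
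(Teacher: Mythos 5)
Your proposal is correct and follows essentially the same route as the paper: the authors derive the bound from the very same two displays (the conditional-probability factorization and the first-exit decomposition $p_D(t,x,y)=p(t,x,y)-\E^x(p(t-\tau,X_D,y);\tau<t)$), concluding exactly as you do from the non-negativity of the subtracted term. The extra alternatives you sketch (minimum principle, regrouping the image series) are fine but not needed; the paper simply declares the lemma trivial given those identities.
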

The next result says that provided we stay from the boundary, we can find a suitable lower bound on the growth of the second moment of the solution of \eqref{eq:dirichlet}. To state our result, we introduce the following notation. For $\epsilon>0$,

\begin{equation}\label{inf-epsilon}
\sI_{\epsilon,\,t}(\lambda):=\inf_{x\in [\epsilon,\,L-\epsilon]}\E|u_t(x)|^2,
\end{equation}
where $u_t$ is the solution to \eqref{eq:dirichlet}.

\begin{proposition}\label{lower-point}
Fix $\epsilon>0$. Then there exists $t_0>0$ such that for all $t\leq t_0$, we have
\begin{equation*}
\liminf_{\lambda \rightarrow \infty}\frac{\log \log \sI_{\epsilon,\,t}(\lambda)}{\log \lambda}\geq 4.
\end{equation*}
\end{proposition}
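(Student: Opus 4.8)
The plan is to show that, on a sufficiently small time interval $(0,T_\epsilon]$, the function $s\mapsto\sI_{\epsilon,s}(\lambda)$ obeys a renewal inequality of the type \eqref{lower} with parameter $k=\lambda^2$, and then to apply Proposition \ref{lowerbound}: because $k=\lambda^2$ gives $\log k=2\log\lambda$, the resulting lower bound $2$ for $(\log\log\sI_{\epsilon,t})/(\log k)$ translates into the claimed lower bound $4$ for $(\log\log\sI_{\epsilon,t})/(\log\lambda)$. Throughout, $\epsilon$ may be taken $<L/2$, since for $\epsilon\ge L/2$ the set $[\epsilon,L-\epsilon]$ is empty and the statement is vacuous.

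First I would apply the Walsh isometry to \eqref{mild:dirichlet}: for $x\in[\epsilon,L-\epsilon]$,
\[
\E|u_t(x)|^2=|(\sG_Du)_t(x)|^2+\lambda^2\int_0^t\!\!\int_0^Lp_D(t-s,x,y)^2\,\E|\sigma(u_s(y))|^2\,\d y\,\d s ,
\]
which is finite by \eqref{moments} and \eqref{dirichupper}. Dropping the nonnegative deterministic term, restricting the inner integral to $y\in[\epsilon,L-\epsilon]$, and using $|\sigma(z)|\ge l_\sigma|z|$ along with $\E|u_s(y)|^2\ge\sI_{\epsilon,s}(\lambda)$ for such $y$, I obtain
\[
\E|u_t(x)|^2\ \ge\ l_\sigma^2\lambda^2\int_0^t\sI_{\epsilon,s}(\lambda)\Big(\int_\epsilon^{L-\epsilon}p_D(t-s,x,y)^2\,\d y\Big)\d s .
\]
The heat-kernel input I need is that there are constants $c_\epsilon>0$ and $T_\epsilon>0$ with $\int_\epsilon^{L-\epsilon}p_D(r,x,y)^2\,\d y\ge c_\epsilon/\sqrt r$ for all $x\in[\epsilon,L-\epsilon]$ and all $0<r\le T_\epsilon$. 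To get it, I would first use Corollary \ref{cor:comp-dirich} to replace $p_D(r,\cdot,\cdot)$ by $\tfrac12p(r,\cdot,\cdot)$ when $r$ is below the threshold provided by that corollary, and then note that $[\epsilon,L-\epsilon]$ always contains a sub-interval of some fixed positive length $\eta_\epsilon$ with $x$ as an endpoint, so that by translation invariance $\int_\epsilon^{L-\epsilon}p(r,x,y)^2\,\d y\ge\int_0^{\eta_\epsilon}p(r,0,z)^2\,\d z$, and this is $\gtrsim r^{-1/2}$ as soon as $r\le\eta_\epsilon^2$. Taking $T_\epsilon$ to be the minimum of the two thresholds and passing to the infimum over $x\in[\epsilon,L-\epsilon]$ gives, for every $0<t\le T_\epsilon$,
\[
\sI_{\epsilon,t}(\lambda)\ \ge\ b\,\lambda^2\int_0^t\frac{\sI_{\epsilon,s}(\lambda)}{\sqrt{t-s}}\,\d s ,\qquad b:=l_\sigma^2c_\epsilon>0 .
\]

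This is \eqref{lower} with $a=0$, which alone is useless, so I would manufacture a positive additive constant by shifting time. Since $u_0$ is a nonzero nonnegative function and the Dirichlet heat kernel is strictly positive in the interior, $\alpha(s):=\inf_{x\in[\epsilon,L-\epsilon]}|(\sG_Du)_s(x)|^2$ is positive and continuous on $(0,\infty)$, and $\sI_{\epsilon,s}(\lambda)\ge\alpha(s)$ for all $\lambda$ and $s$. Fix $t_1\in(0,T_\epsilon)$ and set $h(s):=\sI_{\epsilon,t_1+s}(\lambda)$ for $s\in[0,T_\epsilon-t_1]$. Splitting the last integral at $t_1$, changing variables on $[t_1,t_1+s]$, and using $t_1+s-r\le T_\epsilon$ gives
\[
h(s)\ \ge\ b\,\lambda^2\!\int_0^{t_1}\!\frac{\sI_{\epsilon,r}(\lambda)}{\sqrt{t_1+s-r}}\,\d r+b\,\lambda^2\!\int_0^s\!\frac{h(r)}{\sqrt{s-r}}\,\d r\ \ge\ a_0+b\,\lambda^2\!\int_0^s\!\frac{h(r)}{\sqrt{s-r}}\,\d r ,
\]
with $a_0:=\frac{b}{\sqrt{T_\epsilon}}\int_0^{t_1}\alpha(r)\,\d r>0$ whenever $\lambda\ge1$. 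Thus $h$ satisfies \eqref{lower} on $[0,T_\epsilon-t_1]$ with $k=\lambda^2$, so Proposition \ref{lowerbound} yields $\liminf_{\lambda\to\infty}(\log\log h(s))/(2\log\lambda)\ge2$ for each fixed $s$; unshifting (with $s=t-t_1$) and letting $t_1\downarrow0$ shows $\liminf_{\lambda\to\infty}(\log\log\sI_{\epsilon,t}(\lambda))/(\log\lambda)\ge4$ for every $0<t\le T_\epsilon$, so the proposition holds with $t_0:=T_\epsilon$.

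The step I expect to be the main obstacle is the heat-kernel lower bound $\int_\epsilon^{L-\epsilon}p_D(r,x,y)^2\,\d y\gtrsim r^{-1/2}$ uniform over $x\in[\epsilon,L-\epsilon]$, including $x$ near the endpoints of that interval; this is exactly where the principle that the Dirichlet kernel behaves like the Gaussian kernel at small times away from the boundary (Corollary \ref{cor:comp-dirich}) is indispensable, and it is what confines the argument to small $t$. The time-shift device used to upgrade the vacuous $a=0$ inequality to a genuine instance of \eqref{lower} is a secondary but necessary bookkeeping point.
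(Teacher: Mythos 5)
Your argument is correct and follows essentially the same route as the paper: the It\^o--Walsh isometry, the Gaussian comparison of Corollary \ref{cor:comp-dirich} away from the boundary to obtain $\int_\epsilon^{L-\epsilon}p_D(r,x,y)^2\,\d y\gtrsim r^{-1/2}$, and then Proposition \ref{lowerbound} applied with $k=\lambda^2$. The one place you diverge --- discarding the deterministic term and recovering a positive additive constant via the time-shift $t\mapsto t_1+t$ --- is if anything more careful than the paper, which simply asserts $|(\sG_Du)_t(x)|^2\ge c_1$ uniformly for $x\in[\epsilon,L-\epsilon]$, a bound that degenerates as $t\downarrow 0$ when $u_0$ vanishes somewhere on $[\epsilon,L-\epsilon]$.
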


\begin{proof}
Using the mild formulation and Ito's isometry, we have
\begin{equation}\label{eq:iso-dirich}
\E|u_t(x)|^2=|(\sG_Du)_t(x)|^2+\lambda^2 \int_0^L\int_0^t p^2_D(t-s,\,x,\,y)\E|\sigma(u_s(y))|^2\d s\,\d y
\end{equation}

We now fix an $\ve>0$ and let $t_0$ be defined as in the proof of Corollary \ref{cor:comp-dirich}. We bound the first term on the right hand side of the above display first. Recall that $(\sG_Du)_t(x)$ solves the deterministic heat equation, that is \eqref{eq:dirichlet} with $\lambda=0$.  Provided we stay away from the boundary, it is bounded below by a constant which depends on $t$.  In other words for $x\in [\ve, L-\ve]$, $|(\sG_Du)_t(x)|^2\geq c_1$ for some positive constant $c_1$ depending on $t$.  We now look at the second term. Using Corollary \ref{cor:comp-dirich}, we obtain
\begin{equation*}
\begin{aligned}
\lambda^2 \int_0^L\int_0^t &p^2_D(t-s,\,x,\,y)\E|\sigma(u_s(y))|^2\d s\,\d y\hskip1.0in\\
&\geq\frac{\lambda^2l_\sigma^2}{4} \int_\ve^{L-\ve}\int_0^t p^2(t-s,\,x,\,y)\E|(u_s(y))|^2\d s\,\d y\\
&\geq\frac{\lambda^2l_\sigma^2}{4}\int_0^t \sI_{\ve,\,t}(\lambda)\int_\ve^{L-\ve}p^2(t-s,\,x,\,y)\,\d y\,\d s.
\end{aligned}
\end{equation*}
We now estimate the innermost integral appearing in the above line. For fixed $t$, $s$ and $x\in [\ve, L-\ve]$, set $D:=[\ve,\, L-\ve]\cap\{y: |y-x|\leq \sqrt{t-s}\}$. Hence for $y\in D$, we have $p(t-s,\,x,\,y)\geq c_2/\sqrt{t-s}$, for some constant $c_2$. We therefore have
\begin{equation*}
\label{innerintegral}
\begin{aligned}
\int_\ve^{L-\ve}p^2(t-s,\,x,\,y)\,\d y&\geq c_3\int_D\frac{1}{t-s} \d y\nonumber\\
&\geq \frac{c_4}{\sqrt{t-s}},
\end{aligned}
\end{equation*}
for some constants $c_3$ and $c_4$. We thus have

\begin{equation*}
\begin{aligned}
\lambda^2 \int_0^L\int_0^t &p^2_D(t-s,\,x,\,y)\E|\sigma(u_s(y))|^2\d s\,\d y\hskip1.0in\\
&\geq \lambda^2l_\sigma^2c_4\int_0^t\frac{\sI_{\epsilon,\,s}(\lambda)}{\sqrt{t-s}}\,\d s.
\end{aligned}
\end{equation*}
We now combine the above estimates yield the following inequality

\begin{equation}\label{I:lowerbound}
\sI_{\epsilon,\, t}(\lambda)\geq c_5+\lambda^2l_\sigma^2c_4\int_0^t\frac{\sI_{\epsilon,\,s}(\lambda)}{\sqrt{t-s}}\,\d s.
\end{equation}

The proof now follows from an application of Proposition \ref{lowerbound}.
\end{proof}
We are now ready to prove Theorem \ref{theo:dirich:energy}.
\subsection{Proof of Theorem \ref{theo:dirich:energy}.}
We will first show that $\overline{e}(t)\leq 4$. This will be done in one step.  We will then show that $\underline{e}(t)\geq 4$. We will do so in two steps.  We prove the bound for small times and then extend it to all times by using a suitable trick.

{\it Proof of the upper bound.}

We start off with the mild formulation, take the second moment and then integrate to obtain

\begin{equation}\label{averagemild}
\begin{aligned}
\int_0^L\E|u_t(x)|^2\,\d x&=\int_0^L|(\sG_Du)_t(x)|^2\,\d x\\
&+\lambda^2 \int_0^L\int_0^L\int_0^t p^2_D(t-s,\,x,\,y)\E|\sigma(u_s(y))|^2\d s\,\d y\,\d x.
\end{aligned}
\end{equation}
For fixed $t$, the first term is a bounded function, so that we have $\int_0^L|(\sG_Du)_t(x)|^2\,\d x\leq c_1$.

We now turn our attention to the second term. 
Using \eqref{dirichupper} and the semigroup property of the heat kernel, we end up with
\begin{equation*}
\begin{aligned}
\lambda^2 \int_0^L\int_0^L\int_0^t &p^2_D(t-s,\,x,\,y)\E|\sigma(u_s(y))|^2\d s\,\d y\,\d x\hskip1.0in\\
&\leq\lambda^2L_\sigma^2 \int_0^L\int_0^t p(2(t-s),\,y,\,y)\E|u_s(y)|^2\d s\,\d y\\
&\leq c_3\lambda^2L_\sigma^2\int_0^t\frac{\sE_s^2(\lambda)}{\sqrt{t-s}}\,\d s.
\end{aligned}
\end{equation*}
We now combine the above estimates to obtain
\begin{equation*}
\sE^2_t(\lambda)\leq c_4+c_4\lambda^2L_\sigma^2\int_0^t\frac{\sE_s^2(\lambda)}{\sqrt{t-s}}\,\d s.
\end{equation*}
The upper bound is thus proved after an application of Proposition \ref{upperbound}.

{\it Proof of the lower bound.}\\
{\it Step 1:}  We first prove the lower bound for $t\leq t_0$ where $t_0$ is some positive number.  Being the solution to the deterministic heat equation, $(\sG_Du)_t(x)$ is bounded below by a constant depending on $t$. So the first term of \eqref{averagemild} is thus bounded below.

To find a lower bound on \eqref{averagemild}, we use the lower bound on $\sigma$ as well as the Markov property of killed Brownian motion to find that
\begin{equation*}
\begin{aligned}
\lambda^2 \int_0^L\int_0^L\int_0^t &p^2_D(t-s,\,x,\,y)\E|\sigma(u_s(y))|^2\d s\,\d y\,\d x\hskip1.0in\\
&\geq \lambda^2l_\sigma^2 \int_0^L\int_0^L\int_0^t p^2_D(t-s,\,x,\,y)\E|u_s(y)|^2\d s\,\d x\,\d y\\
&= \lambda^2l_\sigma^2 \int_0^L\int_0^t p_D(2(t-s),\,y,\,y)\E|u_s(y)|^2\d s\,\d y.
\end{aligned}
\end{equation*}
As in the proof of Proposition \ref{lower-point}, we have that the above is
\begin{equation*}
\begin{aligned}
&\geq \lambda^2l_\sigma^2 \int_{\ve}^{L-\ve}\int_0^t p_D(2(t-s),\,y,\,y)\E|u_s(y)|^2\d s\,\d y\\
&\geq \lambda^2l_\sigma^2 \int_0^t\sI_{\ve,\,s}(\lambda)\int_{\ve}^{L-\ve} p_D(2(t-s),\,y,\,y)\d y\,\d s\\
&\geq\lambda^2l_\sigma^2\int_0^{t/2}\sI_{\ve,\,s}(\lambda)\int_{\ve}^{L-\ve} p_D(2(t-s),\,y,\,y)\d y\,\d s.
\end{aligned}
\end{equation*}
The next step is to bound the inner integral appearing in the last display. Corollary \ref{cor:comp-dirich} shows that there exists a constant $c_5$ such that
\begin{equation*}
\begin{aligned}
\int_\ve^{L-\ve} p_D(2(t-s),\,y,\,y)\d y&\geq \frac{1}{2}\int_\ve^{L-\ve} p(2(t-s),\,y,\,y)\d y\\
&\geq c_5.
\end{aligned}
\end{equation*}
if $t\le t_0$ where $t_0$ depends on $\epsilon$. We combine the above estimates to obtain

\begin{equation*}
\int_0^L\E|u_t(x)|^2\,\d x\geq c_6+\lambda^2l_\sigma^2c_7 \int_0^{t/2}\sI_{\ve,\,s}(\lambda)\d s.
\end{equation*}
We now note that \eqref{I:lowerbound} actually means that $\sI_{\ve,\,s}(\lambda)$ grows at least like $\exp(c_8\lambda^4)$. Some calculus then finishes the proof for $t\leq t_0$.

{\it Step 2:}  We now show that the lower bound holds for any $t>0$.  We assume that $t>t_0$, otherwise there is nothing else to prove.  Let $t_1$ be a small constant to be chosen later.  We write $t=t-t_1+t_1$ and set $T=t-t_1$ for notational convenience.  As we have seen before the mild formulation of the solution yields
\begin{equation*}
\E|u_t(x)|^2=|(\sG_Du)_t(x)|^2+\lambda^2 \int_0^L\int_0^t p^2_D(t-s,\,x,\,y)\E|\sigma(u_s(y))|^2\d s\,\d y.
\end{equation*}
We now use the notation $t=T+t_1$ in the above display together with a few lines of computations to obtain

\begin{equation*}
\begin{aligned}
\E|u_{T+t_1}&(x)|^2\\
&=|(\sG_Du)_{T+t_1}(x)|^2+\lambda^2 \int_0^L\int_0^T p^2_D(T+t_1-s,\,x,\,y)\E|\sigma(u_s(y))|^2\d s\,\d y\\
&+\lambda^2 \int_0^L\int_T^{T+t_1} p^2_D(T+t_1-s,\,x,\,y)\E|\sigma(u_s(y))|^2\d s\,\d y,
\end{aligned}
\end{equation*}
which after a change of variable reduces to

\begin{equation*}
\begin{aligned}
\E|u_{T+t_1}&(x)|^2\\
&=|(\sG_Du)_{T+t_1}(x)|^2+\lambda^2 \int_0^L\int_0^T p^2_D(T+t_1-s,\,x,\,y)\E|\sigma(u_s(y))|^2\d s\,\d y\\
&+\lambda^2 \int_0^L\int_0^{t_1} p^2_D(t_1-s,\,x,\,y)\E|\sigma(u_{T+s}(y))|^2\d s\,\d y.
\end{aligned}
\end{equation*}

We now set \begin{equation*}v_s(x):=u_{T+s}(x)\end{equation*} to see that the above gives the following inequality

\begin{equation*}
\begin{aligned}
\E|v_{t_1}&(x)|^2\\
&\geq |(\sG_Du)_{T+t_1}(x)|^2 +\lambda^2 \int_0^L\int_0^{t_1} p^2_D(t_1-s,\,x,\,y)\E|\sigma(v_s(y))|^2\d s\,\d y.
\end{aligned}
\end{equation*}

A close inspection at the proof given in step 1 shows that the above inequality is all that we need once we choose $t_1=t_0/2$ and show that $|(\sG_Du)_{T+t_1}(x)|^2$ is strictly positive.  But the latter fact can be shown to be bounded below by a positive constant which depends on $T$ and $t_1$.  We leave it to the reader to fill in the details.
\qed

\section{The Neumann equation}

We begin this section with a couple of estimates on the Neumann heat kernel.  First, recall that by the method of images, we have
\begin{equation}\label{images-2}
p_N(t,\,x,\,y)=\frac{1}{\sqrt{4\pi t}}\sum_{n=-\infty}^\infty[e^{-\frac{|x-y-2nL|^2}{4t}}+e^{-\frac{|x+y-2nL|^2}{4t}}].
\end{equation}

From the above series, we trivially have
\begin{lemma}
For all $x,\,y\in [0,\,L]$ and $t>0$, the following holds
\begin{equation}\label{neulower}
p_N(t,\,x,\,y)\geq p(t,\,x,\,y).
\end{equation}
\end{lemma}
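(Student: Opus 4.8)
The plan is to read the inequality directly off the method-of-images representation \eqref{images-2}, exactly as one reads \eqref{neulower}'s Dirichlet counterpart \eqref{dirichupper} off \eqref{images-1} — only now the comparison goes the other way, because the Neumann images all enter with a plus sign rather than alternating in sign. First I would isolate the $n=0$ term of the first sum in \eqref{images-2}: it equals
\begin{equation*}
\frac{1}{\sqrt{4\pi t}}\,e^{-\frac{|x-y|^2}{4t}}=p(t,\,x,\,y).
\end{equation*}
Next I would note that every remaining summand appearing in \eqref{images-2} — namely $e^{-|x-y-2nL|^2/4t}$ for $n\neq 0$, together with $e^{-|x+y-2nL|^2/4t}$ for all $n\in\Z$ — is strictly positive, and that the full series converges absolutely (it is the standard theta-type series defining $p_N$). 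Discarding all of these non-negative terms and retaining only the $n=0$ term of the first sum gives $p_N(t,\,x,\,y)\geq p(t,\,x,\,y)$ for all $x,\,y\in[0,\,L]$ and $t>0$, which is the assertion.

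There is essentially no obstacle: the one point worth recording is that dropping terms from the series is legitimate precisely because every term is non-negative, so no issue of conditional convergence or rearrangement arises. This is exactly why the lemma is flagged as trivial in the text, and it is the Neumann analogue of the (equally immediate) bound \eqref{dirichupper}.
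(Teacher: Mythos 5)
Your argument is correct and is exactly the one the paper intends: the lemma is stated as following ``trivially'' from the method-of-images series \eqref{images-2}, i.e.\ by isolating the non-negative $n=0$ term $e^{-|x-y|^2/4t}/\sqrt{4\pi t}=p(t,x,y)$ and discarding the remaining non-negative summands. Your added remark about absolute convergence justifying the term-dropping is a harmless (and correct) elaboration of the same proof.
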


And a little more work shows the following,
\begin{lemma}
Let $T>0$, then for all $x,\,y\in [0,\,L]$ and $t\leq T$, the following holds
\begin{equation}\label{neuupper}
p_N(t,\,x,\,y)\leq c_Tp(t,\,x,\,y),
\end{equation}
where $c_T$ is some constant depending on $T>0$.
\end{lemma}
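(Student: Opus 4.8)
My plan is to work directly from the image series \eqref{images-2} together with the explicit formula $p(t,x,y)=(4\pi t)^{-1/2}e^{-|x-y|^2/(4t)}$. Dividing \eqref{images-2} by $p(t,x,y)$ cancels the common Gaussian prefactor, so the claim becomes equivalent to the \emph{uniform} estimate
\begin{equation*}
\sum_{n\in\Z}e^{-\frac{|x-y-2nL|^2-|x-y|^2}{4t}}+\sum_{n\in\Z}e^{-\frac{|x+y-2nL|^2-|x-y|^2}{4t}}\le c_T\qquad(x,y\in[0,L],\ 0<t\le T).
\end{equation*}
The whole argument then reduces to three observations: every exponent appearing above is non-negative (so each term is at most $1$); only finitely many of these exponents can be zero or small; and the remaining terms are dominated, uniformly in $t\le T$, by a convergent series.

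For the first series I would use the identity $|x-y-2nL|^2-|x-y|^2=4nL\bigl(nL-(x-y)\bigr)$, which is $0$ at $n=0$, and for $n\neq0$ estimate $|x-y-2nL|\ge(2|n|-1)L$ to obtain the lower bound $4|n|(|n|-1)L^2\ge4(|n|-1)^2L^2\ge0$. Hence the three terms $n=0,\pm1$ contribute at most $3$, while $\sum_{|n|\ge2}e^{-(|n|-1)^2L^2/t}=2\sum_{k\ge1}e^{-k^2L^2/t}\le2\sum_{k\ge1}e^{-k^2L^2/T}$, the last step because each such term is non-decreasing in $t$. For the second series the relevant identity is $|x+y-2nL|^2-|x-y|^2=4(nL-x)(nL-y)$, and this is exactly where the hypothesis $x,y\ge0$ is used: for $n\le0$ each factor has absolute value at least $|n|L$, giving the lower bound $4n^2L^2$, while for $n\ge1$ each factor is at least $(n-1)L$, giving $4(n-1)^2L^2$. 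Thus every exponent is again $\ge0$, only the terms $n=0,1$ fail to be exponentially small, and the tail is summable uniformly in $t\le T$ as before. Adding the two bounds yields $p_N(t,x,y)\le\bigl(5+4\sum_{k\ge1}e^{-k^2L^2/T}\bigr)p(t,x,y)$, so one takes $c_T:=5+4\sum_{k\ge1}e^{-k^2L^2/T}$.

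I do not expect a genuine obstacle; the points needing care are purely bookkeeping. First, one must verify the two difference-of-squares identities and use the correct ranges $x-y\in[-L,L]$, $x+y\in[0,2L]$, $x,y\ge0$ so that no exponent is negative — otherwise a single term could already exceed any fixed constant. Second, the estimate must be uniform in $t$, which I would handle by the trivial remark that $e^{-cL^2/t}$ increases in $t$ and hence is maximal at $t=T$ on $0<t\le T$. It is worth emphasising that the restriction $t\le T$ is essential and not an artefact of the method: as $t\to\infty$ one has $p_N(t,x,y)\to1/L$ while $p(t,x,y)\sim(4\pi t)^{-1/2}$, so the ratio grows like $\sqrt{t}$ and no $T$-free constant can work.
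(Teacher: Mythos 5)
Your proof is correct, and the constant $c_T=5+4\sum_{k\ge1}e^{-k^2L^2/T}$ checks out: both difference-of-squares identities ($4nL(nL-(x-y))$ and $4(nL-x)(nL-y)$) are right, every exponent is indeed non-negative on the stated ranges, and the tail estimate is uniform in $t\le T$ by monotonicity. The paper gives no proof at all here (it says only that ``a little more work shows'' the bound), and your argument --- dividing the image series \eqref{images-2} by the free Gaussian kernel and bounding the resulting sum term by term --- is exactly the natural route the authors are alluding to, so there is nothing to compare against and nothing missing.
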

\subsection{Proof of Theorem \ref{theo:neu}}
\begin{proof}
We begin by proving the lower bound first. We start off with the mild formulation and take second moment to end up with
\begin{equation}\label{eq:iso-neu}
\E|u_t(x)|^2=|(\sG_Nu)_t(x)|^2+\lambda^2 \int_0^L\int_0^t p^2_N(t-s,\,x,\,y)\E|\sigma(u_s(y))|^2\d s\,\d y.
\end{equation}
We bound the first term on the right hand side of the above display first. Since $(\sG_Nu)_t(x)$ solves the corresponding deterministic problem, we have that for fixed $t>0$, $(\sG_Nu)_t(x)$ is bounded below by a positive constant depending on $t$.


We now deal with the second term. We will again use \eqref{neulower} as well as the definition of $\sI_t(\lambda)$.

\begin{equation*}
\begin{aligned}
\hskip1.0in\lambda^2 \int_0^L\int_0^t &p^2_N(t-s,\,x,\,y)\E|\sigma(u_s(y))|^2\d s\,\d y\\
&\geq\lambda^2 l_\sigma^2 \int_0^L\int_0^t p_N^2(t-s,\,x,\,y)\E|u_s(y))|^2\d s\,\d y\\
&\geq\lambda^2 l_\sigma^2 \int_0^t \sI_s(\lambda) \int_0^L  p_N^2(t-s,\,x,\,y)\d y\,\d s\\
&\geq\lambda^2 l_\sigma^2 \int_0^t \sI_s(\lambda)p_N(2(t-s),\,x,\,x)\,\d s\\
&\geq\frac{\lambda^2 l_\sigma^2}{\sqrt{4\pi}} \int_0^t \frac{\sI_s(\lambda)}{\sqrt{t-s}}\,\d s.
\end{aligned}
\end{equation*}
Combining the above inequalities, we obtain
\begin{equation*}
\sI_t(\lambda)\geq c_1+\frac{\lambda^2 l_\sigma^2}{\sqrt{4\pi}} \int_0^t \frac{\sI_s(\lambda)}{\sqrt{t-s}}\,\d s.
\end{equation*}
An application of Proposition \ref{lowerbound} yields the lower bound stated in the theorem. We now prove the lower bound.  Our starting point is \eqref{eq:iso-neu}. Finding an upper bound on the first term is straight forward since the initial condition is a bounded function.  For the second term, we need a bit more work.

\begin{equation*}
\begin{aligned}
\hskip1.0in\lambda^2 \int_0^L\int_0^t &p^2_N(t-s,\,x,\,y)\E|\sigma(u_s(y))|^2\d s\,\d y\\
&\leq\lambda^2 L_\sigma^2 \int_0^L\int_0^t p_N^2(t-s,\,x,\,y)\E|u_s(y))|^2\d s\,\d y\\
&\leq\lambda^2 L_\sigma^2 \int_0^t \sS_s(\lambda) \int_0^L  p_N^2(t-s,\,x,\,y)\d y\,\d s\\
&=\lambda^2 L_\sigma^2 \int_0^t \sS_s(\lambda)p_N(2(t-s),\,x,\,x)\,\d s\\
&\leq c_T\frac{\lambda^2 L_\sigma^2}{\sqrt{4\pi}} \int_0^t \frac{\sS_s(\lambda)}{\sqrt{t-s}}\,\d s.
\end{aligned}
\end{equation*}
With this inequality, \eqref{eq:iso-neu} reduces to

\begin{equation*}
\sS_t(\lambda)\leq c_2+\frac{c_T\lambda^2 L_\sigma^2}{\sqrt{4\pi}} \int_0^t \frac{\sS_s(\lambda)}{\sqrt{t-s}}\,\d s.
\end{equation*}
An application of Proposition \ref{upperbound} now yields the desired result.

\end{proof}

\subsection{Proof of Corollary \ref{corr:energy}}
The proof of Corollary \ref{corr:energy} is straightforward.

\begin{proof}
Note that
\begin{eqnarray*}
\|u_t\|^2_{L^2[0,\,L]}\leq \sS^2_t(\lambda) L,
\end{eqnarray*}
from which the upper bound follows. As for the lower bound, we have
\begin{eqnarray*}
\sI^2_t(\lambda) L\leq \|u_t\|^2_{L^2[0,\,L]}.
\end{eqnarray*}
\end{proof}

\begin{small}

\end{small}

\end{document}